\title{Primality of numbers of the form ${ap^{k}+1}$}
\author{Ariko Stephen Philemon\thanks{
                  Makerere University}}

\documentclass{article}
\usepackage{amssymb}
\usepackage{amsmath}
\usepackage{graphicx}
\usepackage{amsthm}

\newtheorem{theorem}{Theorem}[section]
\newtheorem{lemma}{Lemma}[section]
\newtheorem{conjecture}{Conjecture}[section]
\newtheorem{corollary}{Corollary}[theorem]

\newenvironment{keywords}{Keywords:}{}

\theoremstyle{definition}
\newtheorem*{definition}{Definition}

\theoremstyle{remark}
\newtheorem*{remark}{Remark}

\newtheorem{example}{Example}[section]

\usepackage{geometry}
 \geometry{
 a4paper,
 total={170mm,257mm},
 left=31.25mm,
 top=10mm,
right = 31.25mm
 }

\begin{document}

\newpage
\maketitle

\begin{abstract}
In 1876, Edouard Lucas showed that if an integer $b$ exists such that $b^{n-1} \equiv 1 (\mathrm{mod} \ n)$  and $b^{(n-1)/p} \not\equiv 1( \mathrm{mod} \ n)$ for all prime divisors $p$ of $n-1$ , then $n$  is prime, a result known as Lucas’s converse of Fermat’s little theorem. This result was considerably improved by Henry Pocklington in 1914 when he showed that it’s not necessary to know all the prime factors of $n-1$ in order to determine if $n$ is prime. In this paper we optimize Pocklington’s primality test for integers of the form $ap^{k}+1$ where $p$  is prime, $a<p$, $k\ge 1$. An extension of Lucas’s converse of Fermat’s little theorem is given. We also prove a new general-purpose primality test that requires that only a single odd prime divisor  of $n-1$ be found for the test to be implemented. Contrary to the well-known result: There are infinitely many Fermat pseudoprimes to any base $b$; In this paper we prove the finitude of Fermat pseudoprimes in some forms of integers.
\end{abstract}

\begin{keywords}
Primality tests, Fermat Pseudoprimes, Lucas’s test, Pocklington’s test, Factorization
\end{keywords}

\section{Introduction}
The problem of distinguishing primes from composite integers has been of interest to professional and amateur mathematicians alike for many centuries up to date. A number of primality tests have been established; Some of these tests such as Lucas’s converse of Fermat’s little theorem, Pocklington primality test, Proth’s test, Lucas Lehmer test among others determine whether a number is prime with absolute certainty while others such as Fermat’s Primality test, Miller-Rabin test report a number is composite or a probable prime. The previous tests depend on the factorization of $n-1$ or $n+1$ in order to determine the primality of $n$. More information on these tests can be found in \cite{R1}, \cite{R3}, \cite{R5}, \cite{R6}. In this paper we prove a relatively more efficient primality test for integers $n$ of the form $ap^{k}+1$, $a<p$, $k\ge1$ where $p$  is an odd prime. This test does not require computation of some greatest common divisors required in Pocklington’s primality test. Much effort is put in determining for which positive integers of this form the divisibility relation $p^{k} \ | \ \phi(n)$  holds from which the optimized test is deduced using properties of the order of an integer. From this result, a possible extension of Lucas’s converse of Fermat’s little theorem is discussed.
We also prove a new general-purpose primality test for any positive integer $n$ . Unlike Lucas’s converse of Fermat’s little theorem which provides no information on the prime factorization of $n$ , this paper shows that if a positive integer $n$  is a pseudoprime to the base $b$  with $p \ | \  n-1$  for some odd prime $p$ and $b^{(n-1)/p} \not\equiv 1( \mathrm{mod} \ n)$ then $n$ has a prime factor of the form $tp+1$. Although this new primality test has limited application, it’s still worthy attention because of its simplicity.

\begin{definition}
Let $b$  be a positive integer. If $n$ is a composite positive integer and $b^{n-1} \equiv 1( \mathrm{mod} \ n)$ , then $n$ is called a Fermat pseudoprime to the base $b$.
\end{definition}

It’s a well-known result that there are infinitely many Fermat pseudoprimes to any base $b$. In this paper we prove a simple and yet elegant result; For a given positive integer $a$, there are only finitely many Fermat pseudoprimes of the form $ap^k+1$, $k = 1, 2$, to any base $b>1$ where $p$ runs through the primes. We also show that this result holds for $k=3$, $a \not = m^3$. Substantial experimental data suggests this result holds for higher powers of $p$. That is, for a given integer $a$ and integer $k>3$, there are finitely many pseudoprimes of the form $ap^{k}+1$ to any base $b>1$. Nevertheless, it’s still possible that these pseudoprimes are infinitely many to any base $b>1$. 

\begin{definition}
Let $a$ and $n$ be relatively prime integers. The order of $a$ modulo $n$ denoted by $\mathrm{ord}_{n}a$ is the least positive integer $x$ such that $a^{x} \equiv 1 (\mathrm{mod} \ n)$.
\end{definition}
\begin{theorem} \label{Theorem 1.1}
Let $a$ and $n$ be relatively prime integers, then a positive integer $x$ is a solution of the congruence $a^{x} \equiv 1(\mathrm{mod} \ n)$  if and only if $\mathrm{ord}_{n}a \ | \ x$. In particular, $\mathrm{ord}_{n}a \ | \ \phi(n)$.
\end{theorem}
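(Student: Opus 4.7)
The plan is to prove this classical result via the division algorithm applied to the exponent, together with an appeal to Euler's theorem for the ``in particular'' clause. Set $d = \mathrm{ord}_{n}a$, so by definition $d$ is the least positive integer with $a^{d} \equiv 1 \pmod{n}$.

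For the easy direction ($\Leftarrow$), I would assume $d \mid x$, write $x = dq$ for some positive integer $q$, and compute $a^{x} = (a^{d})^{q} \equiv 1^{q} \equiv 1 \pmod{n}$, which settles that half immediately.

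For the forward direction ($\Rightarrow$), suppose $a^{x} \equiv 1 \pmod{n}$. By the division algorithm, write $x = dq + r$ with $0 \le r < d$. Then $a^{x} = (a^{d})^{q} \cdot a^{r} \equiv a^{r} \pmod{n}$, so the hypothesis forces $a^{r} \equiv 1 \pmod{n}$. Since $0 \le r < d$ and $d$ is by definition the \emph{least} positive integer with this property, we cannot have $0 < r$, so $r = 0$ and $d \mid x$.

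Finally, for the ``in particular'' statement, since $\gcd(a,n) = 1$, Euler's theorem gives $a^{\phi(n)} \equiv 1 \pmod{n}$, and applying the equivalence just established with $x = \phi(n)$ yields $d \mid \phi(n)$. There is no real obstacle here; the only subtlety is making sure the minimality of $\mathrm{ord}_{n}a$ is invoked correctly to rule out $0 < r < d$, which is the crux of the forward implication.
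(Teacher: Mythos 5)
Your proof is correct and is the standard textbook argument (division algorithm on the exponent for the forward direction, direct computation for the converse, Euler's theorem for the ``in particular'' clause). The paper itself states Theorem~\ref{Theorem 1.1} as a known preliminary without proof, so there is nothing to compare against; your write-up supplies exactly the argument one would expect, with the minimality of $\mathrm{ord}_n a$ invoked correctly to force $r=0$.
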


\begin{theorem}[Pocklington’s Primality Test] \label{Theorem 1.2}
Suppose that $n$ is a positive integer with $n-1=FR$ where $(F, R)=1$ and $F>R$. The integer $n$ is prime if there exists an integer $b$ such that $(b^{(n-1)/q}-1, \  n)$ whenever $q$ is a prime with $q \ | \ F$ and $b^{n-1 } \equiv 1(\mathrm{mod} \ n)$.
\end{theorem}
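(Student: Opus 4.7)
The plan is to show that every prime divisor of $n$ exceeds $\sqrt{n}$, which forces $n$ itself to be prime. Assuming the hypothesis of the theorem really asserts $(b^{(n-1)/q}-1,n)=1$ for every prime $q\mid F$, I would proceed as follows.

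First I would fix an arbitrary prime divisor $p$ of $n$ and let $d=\mathrm{ord}_{p}b$. The condition $b^{n-1}\equiv 1 \pmod{n}$ implies $b^{n-1}\equiv 1 \pmod{p}$, so by Theorem \ref{Theorem 1.1} we have $d\mid n-1$. On the other hand, the gcd condition $(b^{(n-1)/q}-1,n)=1$ rules out $p\mid b^{(n-1)/q}-1$, so $b^{(n-1)/q}\not\equiv 1\pmod{p}$ and hence $d\nmid (n-1)/q$ for each prime $q\mid F$.

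Next I would extract from these two divisibility facts the central claim: if $q$ is a prime with $q^{e}\,\|\,F$, then $q^{e}\mid p-1$. The reasoning is the standard $q$-adic bookkeeping: since $(F,R)=1$, the exponent of $q$ in $n-1$ equals $e$; writing $q^{f}\,\|\,d$, the condition $d\mid n-1$ forces $f\le e$, while $d\nmid (n-1)/q$ forces $f\ge e$, giving $f=e$. By Theorem \ref{Theorem 1.1} again, $d\mid \phi(p)=p-1$, so $q^{e}\mid p-1$. Running this over every prime $q\mid F$ and multiplying the resulting coprime prime-power divisors yields $F\mid p-1$.

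The conclusion is then a counting argument. Because $F>R$ and $FR=n-1$, we have $F>\sqrt{n-1}$, and the divisibility $F\mid p-1$ gives $p\ge F+1>\sqrt{n-1}+1>\sqrt{n}$. Since this lower bound holds for \emph{every} prime divisor of $n$, the integer $n$ cannot be a product of two or more primes (which would force $n>(\sqrt{n})^{2}=n$), so $n$ is prime. The main obstacle, and the only step requiring genuine care, is the $q$-adic argument pinning down $f=e$; everything else is a direct application of the order-divisibility lemma (Theorem \ref{Theorem 1.1}) and the size relation $F>R$.
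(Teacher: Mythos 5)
The paper states this theorem only as background (Theorem~\ref{Theorem 1.2} is the classical Pocklington criterion, cited from the references) and gives no proof of its own, so there is no in-paper argument to compare against. Your proof is the standard one and is correct: you rightly read the garbled hypothesis as $(b^{(n-1)/q}-1,\,n)=1$, and the chain ``$d=\mathrm{ord}_p b$ divides $n-1$ but not $(n-1)/q$, hence $v_q(d)=v_q(F)$ for each prime $q\mid F$, hence $F\mid p-1$, hence every prime divisor of $n$ exceeds $\sqrt{n}$'' is exactly the textbook argument; the $q$-adic step works because $(F,R)=1$ guarantees $v_q(n-1)=v_q(F)$.
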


\section{Primes of the form $\boldsymbol{ap+1}$}
In this section, we prove a primality test for integers of the form $ap^k+1$ with$\ k=1$. Later we will generalize this test for higher powers of $p$.

\begin{lemma} \label{Lemma 2.1}
Let $n=ap+1$, where $a$ is a positive integer and $p$ is an odd prime. If $p\ |\ \phi(n)$ then $n=(sp+1)(tp+1)$ for some integer $sp+1$ and prime $tp+1$.
\end{lemma}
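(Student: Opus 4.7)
The plan is to read off from the standard formula
\[
\phi(n) \;=\; \prod_{q^{e} \, \| \, n} q^{e-1}(q-1),
\]
where the product runs over the prime powers in the factorization of $n$, exactly what the hypothesis $p \mid \phi(n)$ tells us. Since $p$ is prime, it must divide some factor $q^{e-1}(q-1)$, so either $p=q$ (requiring $e\ge 2$), or $p \mid q-1$ for some prime divisor $q$ of $n$.

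The first alternative is easy to dispose of: if $p \mid n$, then $n \equiv 0 \pmod{p}$, contradicting $n = ap+1 \equiv 1 \pmod{p}$. Hence the second alternative must hold, and I can write such a prime divisor as $q = tp+1$ for some integer $t \ge 1$. The primality of $q$ is automatic, since $q$ was chosen to be a prime factor of $n$, so the ``prime $tp+1$'' demanded by the conclusion is already in hand.

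To produce the other factor, set $m = n/q$. Both $n$ and $q$ are congruent to $1$ modulo $p$, and $q$ is invertible modulo $p$, so $m \equiv n \cdot q^{-1} \equiv 1 \pmod{p}$; writing $m = sp+1$ with $s \ge 0$ gives the desired factorization $n = (sp+1)(tp+1)$. The argument is essentially bookkeeping and I do not anticipate a real obstacle; the only subtlety is remembering that the lemma does \emph{not} insist $sp+1$ be prime and that the edge case $s=0$ is permitted (it simply corresponds to $n$ itself being prime, with $t=a$).
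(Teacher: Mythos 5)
Your proof is correct and follows essentially the same route as the paper: extract from $p \mid \phi(n)$ a prime factor $q \equiv 1 \pmod p$ of $n$ (ruling out $p \mid n$ via $n \equiv 1 \pmod p$), then show the cofactor $n/q$ is also $\equiv 1 \pmod p$. The only cosmetic difference is that you deduce $n/q \equiv 1 \pmod p$ by inverting $q$ modulo $p$, whereas the paper does the equivalent algebraic manipulation $p(a-mt)=m-1$.
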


\begin{proof}
Let $n={\ p_1}^{a_1}{p_2}^{a_2}\ \cdots \ {p_k}^{a_k}$ be the prime power factorization of $n$. We have  $\phi(n)={p_1}^{a_1-1}\ (p_1-1){{\ p}_2}^{a_2-1}\ (p_2-1)\cdots \ {p_k}^{a_k-1}\ (p_k-1)$. The condition $p\ |\ \phi (n)$ implies  $p=\ p_i\ $or $p\ |\ p_i-1$ for some $i=1$, $2$, $\dots $,$\ k$.  If  $p=\ p_i$, then $p\ |\ n-ap=1$, which is not possible. Hence $p\ |\ p_i-1$ for some $i$. Thus, $p_i=tp+1$ for some integer $t$. Thus, $n=mp_i=m(tp+1)=ap+1.$ Factoring out $p$, we have  $p(a-mt)=m-1$,  $p\ |\ m-1$, $m=sp+1$ for some integer $s$. Thus, $n=mp_i=(sp+1)(tp+1)$. This completes the proof.
\end{proof}

\begin{remark}
If $n$ is odd and composite, we must have  $s\ge 2$, $t\ge 2$. It follows that for all $a<4(p+1)$, we have $p\ |\ \phi(n)$ if and only if $n$ is prime.
\end{remark}

\begin{theorem} \label{Theorem 2.1}
Let $n=ap+1$ where $a$ is even and $p$ is an odd prime with $a<4(p+1)$. If there exists an integer $b$ such that $b^{n-1}\equiv 1\ (\mathrm{mod}\ n)$ and $b^a\not\equiv 1(\mathrm{mod}\ n)$ then $n$ is prime.
\end{theorem}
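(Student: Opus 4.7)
\medskip

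\noindent\textbf{Proof proposal.} The plan is to use Lemma~\ref{Lemma 2.1} together with the remark following it, so the entire proof reduces to showing that $p \mid \phi(n)$. The hypotheses on $b$ control this via Theorem~\ref{Theorem 1.1}.

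First I would set $d = \mathrm{ord}_{n}b$. Since $b^{n-1} \equiv 1 \pmod{n}$ and $n-1 = ap$, Theorem~\ref{Theorem 1.1} gives $d \mid ap$. The hypothesis $b^{a} \not\equiv 1 \pmod{n}$ forces $d \nmid a$. Because $p$ is prime, any divisor of $ap$ that fails to divide $a$ must be a multiple of $p$, hence $p \mid d$. Applying the second clause of Theorem~\ref{Theorem 1.1} (i.e.\ $d \mid \phi(n)$), we conclude $p \mid \phi(n)$.

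Next I would invoke Lemma~\ref{Lemma 2.1}: if $n$ were composite then $n = (sp+1)(tp+1)$ for some positive integers $s, t$. Because $a$ is even and $p$ is odd, $n = ap+1$ is odd, so both factors $sp+1$ and $tp+1$ are odd; combined with $p$ odd this forces $s$ and $t$ both even, hence $s, t \ge 2$. Then
\[
n = (sp+1)(tp+1) \ge (2p+1)^{2} = 4p^{2} + 4p + 1,
\]
so $ap + 1 \ge 4p^{2} + 4p + 1$, i.e.\ $a \ge 4(p+1)$. This contradicts the assumption $a < 4(p+1)$, and therefore $n$ is prime.

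The argument is essentially mechanical once Lemma~\ref{Lemma 2.1} is in hand; the only subtle point is the parity check that promotes $s, t \ge 1$ to $s, t \ge 2$, which is exactly the content of the preceding remark. The main conceptual obstacle, which has already been discharged by Lemma~\ref{Lemma 2.1}, was translating the divisibility $p \mid \phi(n)$ into the useful factorization $n = (sp+1)(tp+1)$; here we simply exploit that translation together with a size comparison.
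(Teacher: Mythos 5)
Your proof is correct and follows essentially the same route as the paper: both arguments use Theorem~\ref{Theorem 1.1} to show that the hypotheses on $b$ force $p \mid \mathrm{ord}_n b \mid \phi(n)$, and then invoke Lemma~\ref{Lemma 2.1} together with the bound $a < 4(p+1)$ (the paper phrases this contrapositively, you argue directly, which is a cosmetic difference). If anything, yours is slightly more complete, since you explicitly carry out the parity argument giving $s,t \ge 2$ and the size comparison $(2p+1)^2 > ap+1$, which the paper delegates to the unproved Remark following Lemma~\ref{Lemma 2.1}.
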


\begin{proof}
We will show that if $n$ is composite and $b^{n-1}\equiv 1(\mathrm{mod}\ n)$ then $b^a\equiv 1(\mathrm{mod}\ n)$.   Assume $n$ is composite and $b^{n-1}\equiv 1(\mathrm{mod}\ n)$. From Theorem~\ref{Theorem 1.1}, $\mathrm{ord}_nb\ |\ \phi(n)$. Therefore if  $p\ |\ \mathrm{ord}_nb\ $we have $p\ |\ \phi(n)$ and from Lemma~\ref{Lemma 2.1} we know $n$ is prime, a contradiction because $n$ is assumed composite. Hence, we must have $p\nmid \mathrm{ord}_nb$, equivalently $(\mathrm{ord}_nb$, $p)=1$. From Theorem~\ref{Theorem 1.1}, we also note that  $\mathrm{ord}_nb\ |\ n-1=ap$.  Thus, $\mathrm{ord}_nb\ |\ a$ and from Theorem~\ref{Theorem 1.1}, $b^a\equiv 1\ (\mathrm{mod}\ n)$. Consequently if  $b^{n-1}\equiv 1(\mathrm{mod}\ n)$ and $b^a\not\equiv 1(\mathrm{mod}\ n)$, then we know $n$ is prime.
\end{proof}

\begin{remark}
A slightly more efficient primality test is obtained by replacing the hypothesis $b^{n-1}\equiv 1\ (\mathrm{mod}\ n)$ with $b^{(n-1)/2}\equiv \pm 1\ (\mathrm{mod}\ n)$.
\end{remark}

\begin{example}
Suppose we want to test whether $547=42\cdot 13+1$ is prime. Using fast modular exponentiation techniques, it can be verified that $2^{546/2}\equiv -1(\mathrm{mod}\ 547)$ and $2^{42}\equiv 475\not\equiv 1(\mathrm{mod}\ 547)$ and from Theorem 2.1, $547$ is prime. 
\end{example}
From Theorem~\ref{Theorem 1.2}, we note $n=b^a-1$ is the largest integer $n$ such that $b^a\equiv 1\ (\mathrm{mod}\ n)$, $b>1$. Assume $n>b^a-1$ or equivalently $p>(b^a-1)/a$ . It follows that if  $p>(b^a-1)/a$, then $b^a\not\equiv 1\ (\mathrm{mod}\ n)$. Furthermore if $b^{n-1}\equiv 1\ (\mathrm{mod}\ n)$ and $p<(a-1)/4$, from Theorem~\ref{Theorem 2.1} we know $n$ is prime. We state this result as a corollary

\begin{corollary} \label{Corollary 2.1}
Let $n=ap+1$ where $a$ is even and $p$ is an odd prime with $p>\{max(a-1)/4,\ {(b}^a-1)/a\ \}$. If $b>1$ is a positive integer and $b^{n-1}\equiv 1\ (\mathrm{mod}\ n)$ then $n$ is prime.
\end{corollary}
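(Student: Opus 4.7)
The plan is to reduce the corollary directly to Theorem~\ref{Theorem 2.1}. That theorem has three inputs: the size condition $a<4(p+1)$, an integer $b$ with $b^{n-1}\equiv 1\pmod n$, and the nonvanishing condition $b^a\not\equiv 1\pmod n$. The hypothesis already supplies the second of these, so I only need to verify the first and third from the two lower bounds on $p$.

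For the size condition, I would start from $p>(a-1)/4$ and multiply by $4$ to get $4p>a-1$, hence $a<4p+1\le 4(p+1)$. So the range hypothesis of Theorem~\ref{Theorem 2.1} is met and one may apply it once the nonvanishing condition is established.

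For the nonvanishing condition, I plan to argue by size. From $p>(b^a-1)/a$ I would multiply through by $a$ to get $ap>b^a-1$, so $n=ap+1>b^a>b^a-1$. Since $b>1$ and $a\ge 2$ (as $a$ is even and positive), $b^a-1$ is a strictly positive integer; no integer exceeding a positive integer can divide it, so $n\nmid b^a-1$, i.e.\ $b^a\not\equiv 1\pmod n$. Combining this with the given congruence $b^{n-1}\equiv 1\pmod n$ and applying Theorem~\ref{Theorem 2.1} yields the primality of $n$.

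The argument is essentially bookkeeping once Theorem~\ref{Theorem 2.1} is in hand, so there is no real obstacle; the only subtlety worth calling out is the role of the assumption $b>1$, which is what prevents $b^a-1$ from being zero and keeps the divisibility comparison meaningful. Everything else is a direct translation of the two given numerical bounds into the two hypotheses the earlier theorem wants.
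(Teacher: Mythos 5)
Your proposal is correct and matches the paper's own argument: the paper likewise derives $b^a\not\equiv 1\pmod n$ from $n>b^a-1$ (equivalently $p>(b^a-1)/a$) and then invokes Theorem~\ref{Theorem 2.1} via the bound $p>(a-1)/4$. Your write-up is if anything slightly more careful, since it makes explicit the role of $b>1$ in keeping $b^a-1$ positive.
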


\begin{remark}
It's a well-known result that there are infinitely many Fermat pseudoprimes to any base $b$. However, Corollary~\ref{Corollary 2.1} demonstrates that for a given positive integer $a$, there are only finitely many Fermat pseudoprimes of the form $ap+1$ to any base $b>1$.
\end{remark}
\begin{example}
Taking $b=2$ and $a=2$, we compute $(b^a-2)/a=(2^2-2)/2=1$. Assume $p>2$, Corollary~\ref{Corollary 2.1} tells us that if $n=2p+1$ then  $2^{n-1}\equiv 1\ (\mathrm{mod}\ n)$ if and only if $n$ is prime or equivalently $p$ is a Sophie Germain prime if and only if $2^{2p}\equiv 1\ (\mathrm{mod}\ 2p+1)$. If we take $b=2$ and $a=6$, we have $(2^6-2)/6=31/3<11$. Taking $p\ge 11$ and $n=6p+1$, we have $2^{n-1}\equiv 1\ (\mathrm{mod}\ n)$ if and only if $n$ is prime.
\end{example}

Lemma~\ref{Lemma 2.1} can be extended to provide a primality test for any positive integer $n$ with $p\ |\ n-1$ for some odd prime $p$. From Lemma ~\ref{Lemma 2.1}, we have $p\ |\ \phi (n)$ if and only if $n=(sp+1)(tp+1)$ for some prime $tp+1$ and integer $sp+1$. To prove that $p\ |\ \phi (n)$ if and only if $n$ is prime, it suffices to show that $sp+1\nmid n$ for all $s\ge 1$, $sp+1\le \sqrt{n}$.

\begin{theorem}[General purpose primality test] \label{Theorem 2.2}
Let $n-1=mp$ for some odd prime $p$. If $sp+1\nmid n$ for all integers $1<sp+1\le \sqrt{n}$  and there exists an integer $b$ such that $b^{n-1}\equiv 1\ ( \mathrm{mod}\ n)$ and $b^m\not\equiv 1 ( \mathrm{mod} \ n)$ then $n$ is prime.
\end{theorem}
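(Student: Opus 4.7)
The plan is to argue by contraposition along the same lines as Theorem~\ref{Theorem 2.1}, using the extended form of Lemma~\ref{Lemma 2.1} mentioned in the paragraph preceding the theorem. Concretely, I would assume $n$ is composite and derive that some $sp+1$ with $1<sp+1\le\sqrt{n}$ divides $n$, contradicting the stated hypothesis.

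First I would extract what the existence of $b$ tells us about the order. From $b^{n-1}\equiv 1\pmod{n}$ together with Theorem~\ref{Theorem 1.1}, we get $\mathrm{ord}_n b \mid n-1 = mp$. The condition $b^m \not\equiv 1\pmod{n}$, again by Theorem~\ref{Theorem 1.1}, means $\mathrm{ord}_n b \nmid m$. Since $p$ is prime, the only way a divisor of $mp$ can fail to divide $m$ is if it contains the factor $p$; hence $p \mid \mathrm{ord}_n b$. Combined with $\mathrm{ord}_n b \mid \phi(n)$, this forces $p \mid \phi(n)$.

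Next I would invoke the generalization of Lemma~\ref{Lemma 2.1}: the lemma's proof only uses $n\equiv 1\pmod{p}$, which holds here since $n-1=mp$. Applying it gives a factorization $n=(sp+1)(tp+1)$ with $tp+1$ a prime divisor of $n$ and $sp+1$ an integer. Since $n$ is assumed composite, neither factor equals $1$, so $s\ge 1$ and $t\ge 1$. The smaller of the two factors, call it $d$, therefore satisfies $1<d\le\sqrt{n}$, and $d$ has the form $rp+1$ for some positive integer $r$. This $d$ divides $n$, directly contradicting the hypothesis that $sp+1\nmid n$ for all integers $1<sp+1\le\sqrt{n}$. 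The contrapositive then yields that $n$ must be prime.

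The only delicate step is step two, justifying the extended Lemma~\ref{Lemma 2.1} on arbitrary $n$ with $p\mid n-1$; but a quick rereading of the original proof confirms that the argument never uses any property of $n$ beyond $n\equiv 1\pmod{p}$, so the extension is essentially immediate and can be invoked without fresh work. Everything else is a routine order-of-element argument, so I anticipate no serious obstacle beyond making explicit this mild generalization of the lemma.
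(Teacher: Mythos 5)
Your proposal is correct and follows exactly the route the paper intends: the paper gives no formal proof of Theorem~\ref{Theorem 2.2}, only the sketch in the preceding paragraph (extend Lemma~\ref{Lemma 2.1} using only $n\equiv 1\pmod p$, note the smaller factor of $(sp+1)(tp+1)$ is at most $\sqrt{n}$, and combine with the order argument of Theorem~\ref{Theorem 2.1}), and you have filled in precisely those details, including the correct justification that $\mathrm{ord}_n b\mid mp$ and $\mathrm{ord}_n b\nmid m$ force $p\mid\mathrm{ord}_n b$.
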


\begin{remark}
Because of the trial divisions involved, Theorem~\ref{Theorem 2.2} has limited application. It can only be implemented when the prime $p$ is large enough such that there are a few integers $sp+1$ to check. Theorem~\ref{Theorem 2.2} tells us that if $n$ is a pseudoprime to the base $b$ and $b^m\not\equiv 1(\mathrm{mod}\ n)$ then $n$ has a prime factor of the form $tp+1$. This new primality test is a combination of modular exponentiation and trial division.
\end{remark}

\begin{example}
To test $n=700001=100000\cdot 7+1$ for primality, we find that $2^{n-1}\equiv 1\ (\mathrm{mod}\ n)$ and $2^{(n-1)/7}\equiv 158306\not\equiv 1(\mathrm{mod}\ n)$. To complete the primality test, we verify that $7s+1\nmid n$ for all integers $7s+1\le \sqrt{n}=\sqrt{700001}<837\ $. Because $n$ is odd, only the odd integers of the arithmetic progression $7s+1$ may be considered. Note that $s\le 119$. We quickly find that $7s+1\nmid n$, $s=2,\ 4,\dots,\ 118$. Therefore, by Theorem~\ref{Theorem 2.2} $n$ is prime.
\end{example}

\section{Generalization of Theorem~\ref{Theorem 2.1} for higher powers of $\boldsymbol{p}$}
In this section we generalize the primality test presented in Theorem~\ref{Theorem 2.1} for higher powers of $p$. Using a similar argument presented in the proof of Lemma~\ref{Lemma 2.1}, it can be shown that if  $n=ap^k+1$, where $a$  and $k$ are positive integers, $p$ is a prime with $a<p$ then  $p^k\ |\ \phi (n)$ if and only if $n$ is prime. It follows that if $n$ is composite and $b^{n-1}\equiv 1(\mathrm{mod}\ n)$, the highest power of $p$ in $\mathrm{ord}_nb$ is less than $p^k$ so that $b^{{ap}^{k-1}}=b^{(n-1)/p}\equiv 1(\mathrm{mod}\ n)$. We proceed to give a detailed proof.

\begin{lemma} \label{Lemma 3.2}
Let $a<p$, with $p$ an odd prime and $k\ge 1$ such that with $n=ap^k+1$ we have $p^k\ |\ \phi(n) $. Then $n$ is prime.
\end{lemma}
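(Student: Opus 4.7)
The plan is to assume $n=ap^k+1$ is composite and derive the contradiction $n\ge p^{k+1}$, which clashes with $n<p^{k+1}$ (this last inequality comes from $a<p$). Writing the prime factorization $n=p_1^{a_1}\cdots p_r^{a_r}$ and setting $e_i=v_p(p_i-1)$, the hypothesis translates to $\sum_i e_i=v_p(\phi(n))\ge k$, since $p\nmid n$.

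I would then run a sequence of size reductions, each closed off by $n<p^{k+1}$. \emph{(i)} If some $e_i\ge k$, then $p_i\equiv 1\pmod{p^k}$, so $p_i\ge p^k+1$; since $n$ is composite, $n/p_i\ge 2$ and $n/p_i\equiv 1\pmod{p^k}$, so $n/p_i\ge p^k+1$, giving $n\ge(p^k+1)^2>p^{k+1}$, a contradiction. Hence $e_i\le k-1$ for every $i$. \emph{(ii)} If some $e_j=0$, write $n=QR$ with $R=\prod_{e_j=0}p_j^{a_j}>1$; then $R\equiv 1\pmod p$ so $R\ge p+1$, while $Q>p^{\sum_{e_j\ge 1}e_j}\ge p^k$, giving $n>p^{k+1}$. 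So every $p_i\equiv 1\pmod p$. \emph{(iii)} If any $a_i\ge 2$, then $n>p^{\sum_j a_je_j}\ge p^{k+e_i}\ge p^{k+1}$; hence $n$ is squarefree. \emph{(iv)} Finally $n>p^{\sum e_i}$ combined with $n<p^{k+1}$ forces $\sum e_i=k$, and $r\ge 2$ since every $e_i\le k-1$.

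The hard step is the concluding size estimate, which requires the full congruence $n\equiv 1\pmod{p^k}$ rather than just modulo $p$. Writing $p_i=1+c_ip^{e_i}$ with $c_i\ge 1$ and $p\nmid c_i$, the expansion of $\prod_i(1+c_ip^{e_i})$ separates as $n-1=T+(\prod_i c_i)p^k$, where
\[
T \;=\; \sum_{\emptyset\ne S\subsetneq[r]} \Bigl(\prod_{i\in S}c_i\Bigr)\,p^{\sum_{i\in S}e_i}.
\]
From $p^k\mid n-1$ one gets $p^k\mid T$ and hence $T\ge p^k$, but this only yields $a\ge 2$. To reach $a\ge p$ one must exploit the finer cancellation structure: ordering $e_1\le\cdots\le e_r$, the coefficient at $p^{e_1}$ in $T$ is $\sum_{i:\,e_i=e_1}c_i$, and divisibility by $p$ forces at least two of the $e_i$'s to coincide with $e_1$ with the corresponding $c_i$'s summing to a multiple of $p$; propagating this level by level through the $p$-adic expansion of $T$, together with an AM--GM-type bound on $\prod c_i$ (which controls the top term), yields the required $T+(\prod_i c_i)p^k\ge p^{k+1}$. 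I anticipate this last step to be the main technical obstacle.
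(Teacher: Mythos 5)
Your reductions (i)--(iv) are all correct, and they are in fact a cleaner route to the key configuration than the paper's own preliminaries: the paper reaches the same point ($n$ a squarefree product of $r\ge 2$ primes $p_i=1+c_ip^{e_i}$ with $p\nmid c_i$ and $\sum_i e_i=k$) by comparing $\phi(n)=p^kb$, $b<a$, against the factorization, whereas you get there by repeated use of $n<p^{k+1}$. The problem is the concluding step, which you explicitly leave open (``I anticipate this last step to be the main technical obstacle''), and the route you sketch for it --- propagating a divisibility condition ``level by level through the $p$-adic expansion of $T$'' --- is not what is needed and is not a proof. As written, the heart of the lemma is exactly the part you defer, so there is a genuine gap.

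The gap closes in three lines once you add one observation you are missing: each $c_i$ is \emph{even}, hence $c_i\ge 2$. (Every $p_i\equiv 1\pmod p$ is an odd prime, so $p_i-1=c_ip^{e_i}$ is even while $p^{e_i}$ is odd.) Let $c=\min_i e_i$ and $I=\{i:e_i=c\}$. Since every $e_i\ge 1$, any subset $S$ with $|S|\ge 2$ contributes exponent $\sum_{i\in S}e_i\ge 2c\ge c+1$, so reducing $n-1=ap^k$ modulo $p^{c+1}$ (legitimate because $c<k$) leaves only the singletons indexed by $I$ and gives $p\mid\sum_{i\in I}c_i$ --- this much you have. But then, because all $c_i\ge 2$, one gets $p\le\sum_{i\in I}c_i\le\prod_{i\in I}c_i\le\prod_{i=1}^{r}c_i\le a<p$, where the last inequality but one is your ``top term'' observation $ap^k=n-1\ge\bigl(\prod_i c_i\bigr)p^k$. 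That is an outright contradiction: no further levels need to be examined, and this single-level argument is precisely how the paper finishes (its $s_i$ are your $c_i$). Note that with only $c_i\ge 1$, as you wrote, the inequality $\sum_{i\in I}c_i\le\prod_{i\in I}c_i$ is false in general, so the evenness of the $c_i$ is not a cosmetic point but the ingredient that makes the close work.
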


\begin{proof}
Assume that $n$ is not prime. Let
\begin{equation*}
n=ap^k+1=A{q_1}^{b_1}\dots \ {q_j}^{b_j},
\end{equation*}
where $q_1,\dots ,\ q_j$ are primes such that $p\ |\ q_i-1$ for $i=1,\dots , j$ and $A$ consists of primes which are not congruent to $1$ modulo $p$ (if $A > 1)$. Since $p^k\ |\ \phi \left(n\right)$, we have $\phi \left(n\right)= p^{k}b$ for some $b<a$ (if $b=a$ then $\phi \left(n\right)=n-1$, so $n$ is prime which is false). If $b_i \ge 2$ for some $i$ then $q_i\ |\ \phi \left(n\right)$. Since $q_i\neq p$, we have $q_i\ |\ b$. Hence, $p <q_i \le b < a < p$, a contradiction. Thus, $b_1=\dots =b_j=1$. Since $p^kb\ =\phi (n)=\phi (A)(q_1 -1)\cdots(q_j -1)$, we can write $q_i =p^{u_i}s_i + 1$, where $s_i$ is coprime to $p$ and $u_1+\dots + u_j= k$.  Note that $s_1,\dots ,s_j$ are all even. Going back we get
\begin{equation}\label{eqn1}
p^k a+1= A(p^{u_1}s_1+1)\cdots (p^{u_j}s_j +1)
\end{equation}
Expanding the right-hand side we get
\begin{equation*}
 p^{k}As_1\cdots s_j + \mathrm{other\ terms}
\end{equation*}
From the above expansion, we see that $As_1\cdots s_j \le a<p$. On the other hand, reducing  $\eqref{eqn1}$ modulo $p$, we get $A \equiv 1 \ (\mathrm{mod} \ p)$. This forces $A=1$ since $A<p$. Because $n$ is assumed composite, we must have $j \ge 2$.  Assume $u_1  \le  \cdots \le u_j$. There's a positive integer $t$ such that $u_1=\dots = u_t=c<u_{t+1} \le \cdots \le u_j$. Reducing  $\eqref{eqn1}$ modulo $p^{c+1 }$, we get $ p^c s_1+ \cdots +  p^c s_t \equiv 0 \  (\mathrm{mod} \  p^{c+1}) $ (this holds because $j \ge 2$, $k>c$ ), so  $p \ | \ s_1 + \cdots + s_t$. Since $s_1, \dots , s_t$ are all even (in aparticular at least 2), one proves by induction that $s_1+ \cdots + s_t \le s_1 \cdots s_t$ with the base induction step being $s_1+s_2 \le s_1s_2$ which holds since its equivalent to $(s_1-1)(s_2-1) \ge 1.$  Hence, $ p \le s_1 + \cdots + s_t \le s_1 \cdots s_t  \le a<p$ , a contradiction. Therefore $n$ must be prime.
\end{proof}

\begin{theorem} \label{Theorem 3.1}
Let $n=ap^k+1$, $a$ and $k$ are positive integers, $p$ is an odd prime, $a<p$. If there exists an integer $b$ such that $b^{n-1}\equiv 1\ (\mathrm{mod}\ n)$ and $b^{(n-1)/p}\not\equiv 1(\mathrm{mod}\ n)$ then $n$ is prime.
\end{theorem}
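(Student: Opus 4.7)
The plan is to imitate the proof of Theorem~\ref{Theorem 2.1}, using Lemma~\ref{Lemma 3.2} in place of Lemma~\ref{Lemma 2.1}. That is, I would argue the contrapositive: assuming $n$ is composite and $b^{n-1}\equiv 1\ (\mathrm{mod}\ n)$, I would show that necessarily $b^{(n-1)/p}\equiv 1\ (\mathrm{mod}\ n)$, which contradicts the hypothesis.

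First I would invoke Theorem~\ref{Theorem 1.1} twice: since $b^{n-1}\equiv 1\ (\mathrm{mod}\ n)$ it gives $\mathrm{ord}_n b \mid n-1 = ap^k$, and it also always gives $\mathrm{ord}_n b \mid \phi(n)$. Write $\mathrm{ord}_n b = p^j d$ with $\gcd(d,p)=1$ and $0\le j \le k$. The key dichotomy is on the value of $j$.

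Next I would rule out $j=k$. If $p^k \mid \mathrm{ord}_n b$, then $p^k \mid \phi(n)$, and Lemma~\ref{Lemma 3.2} (whose hypotheses $a<p$, $p$ odd, $k\ge 1$ match those of the theorem) forces $n$ to be prime, contradicting the assumption that $n$ is composite. Hence $j\le k-1$, which combined with $\mathrm{ord}_n b \mid ap^k$ gives $\mathrm{ord}_n b \mid ap^{k-1} = (n-1)/p$. Applying Theorem~\ref{Theorem 1.1} one more time yields $b^{(n-1)/p}\equiv 1\ (\mathrm{mod}\ n)$, the desired contradiction.

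I do not anticipate any real obstacle at this stage: all the heavy lifting has been carried out in Lemma~\ref{Lemma 3.2}, and the deduction from that lemma to the primality criterion is the same short order-of-element argument already used for Theorem~\ref{Theorem 2.1}. The only point to be a touch careful about is writing out the divisibility $\mathrm{ord}_n b \mid ap^{k-1}$ rather than just $\mid p^{k-1}$, so that one actually gets the exponent $(n-1)/p$ and not some smaller exponent.
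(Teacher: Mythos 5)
Your proposal is correct and follows essentially the same route as the paper: decompose $\mathrm{ord}_n b$ into its $p$-part and prime-to-$p$ part, use Lemma~\ref{Lemma 3.2} (via $\mathrm{ord}_n b \mid \phi(n)$) to cap the $p$-part at $p^{k-1}$, and conclude $\mathrm{ord}_n b \mid ap^{k-1} = (n-1)/p$. The point you flag about tracking the full divisor $ap^{k-1}$ rather than just $p^{k-1}$ is exactly the step the paper handles with its $d_1 d_2$ factorization.
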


\begin{proof}
Assume $n$ is composite and  $b^{n-1}\equiv 1\ (\mathrm{mod}\ n)$. From Theorem~\ref{Theorem 1.1},  $\mathrm{ord}_nb\ |\ n-1=ap^k$. Since  $(a$, $p^k)=1$, we have $\mathrm{ord}_nb=d_1d_2$, $d_1\ |\ a$,  $d_2\ |{\ p}^k$,  $d_2={\ p}^t$.  From Lemma~\ref{Lemma 3.2}, we must have  $0\le t\le k-1$ hence  $d_2\ |\ {\ p}^{k-1}$.  ${\mathrm{ord}_nb=d}_1d_2\ |\ a\cdot p^{k-1}\ $. It follows from Theorem 1.1 that $b^{(n-1)/p}=b^{ap^{k-1}}\equiv 1\ (\mathrm{mod}\ n)$. Consequently if $b^{n-1}\equiv 1\ (\mathrm{mod}\ n)$ and $b^{(n-1)/p}\not\equiv 1(\mathrm{mod}\ n)$ then we know $n$ is prime.
\end{proof}

\begin{example}
To test  $727=6\cdot {11}^2+1$ for primality; Using fast modular exponentiation, it can be shown that $2^{6\ \cdot {\ 11}^2}\equiv 1\ (\mathrm{mod}\ 727)$ and $2^{6\ \cdot \ 11}\equiv 590\not\equiv 1\ (\mathrm{mod}\ 727)$. Therefore, from Theorem~\ref{Theorem 3.1}, $727$ is prime. 
\end{example}
Alternatively, we can make use of Pocklington's primality test to show that $727$ is prime. The steps required to show $727$ is prime are exactly the same as in the optimized test except the additional gcd check required in Pocklington's test. i.e. there's need to further verify that $(590,\ 727)=1$.

\section{Generalization of Lemma~\ref{Lemma 3.2} for $\boldsymbol{am+1}$ integers}
Generalization of Lemma~\ref{Lemma 3.2} will provide a relatively more efficient primality test for a broader set of positive integers. Substantial experimental data suggests that if $n=am+1$, $a<p$, $p$ is the least prime divisor of $m$, then $m\ |\ \phi (n)$ if and only if $n$ is prime.

\begin{conjecture} \label{Conjecture 4.1}
Let $n=am+1$, where $a$ and $m$ are positive integers and let $p$ be the least prime divisor of $m$. If $a<p$ and  $m\ |\ \phi (n)$ then $n$ is prime.
\end{conjecture}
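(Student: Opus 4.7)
The plan is to imitate the structure of the proof of Lemma~\ref{Lemma 3.2} while coping with the fact that $m$ is no longer required to be a prime power. Assume $n$ is composite. Writing $\phi(n) = mc$, the fact that $\phi(n) < n - 1$ gives $c < a < p$, and since every prime divisor of $m$ is at least $p$ we automatically have $\gcd(c,m)=1$. Because $n = am + 1$, $\gcd(n,m)=1$, so no prime factor of $n$ divides $m$; moreover, any prime $q<p$ satisfies $\gcd(q-1,m)=1$ because every prime factor of $q-1$ is strictly smaller than the smallest prime of $m$.

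I would then split the prime divisors of $n$ into Type~I primes (those with $\gcd(q-1,m)=1$, which includes every prime $<p$) and Type~II primes (those with $\gcd(q-1,m)>1$, forcing $q>p$), and write $n = AB$ accordingly. The multiplicity argument from Lemma~\ref{Lemma 3.2} carries over: if $v_q(n)\ge 2$ then $q\mid\phi(n)=mc$ and $q\nmid m$, so $q\mid c<p$, hence $q$ is Type~I. Thus $B=q_1\cdots q_j$ is squarefree. Since $\gcd(\phi(A),m)=1$, the divisibility $m\mid \phi(n)$ reduces to $m\mid\phi(B)=\prod(q_i-1)$; write $\phi(B)=mc'$ with $c'<p$ and factor $q_i-1=d_i t_i$ where $d_i$ is the $m$-part of $q_i-1$ and $\gcd(t_i,m)=1$, yielding the clean identities $\prod d_i = m$ and $\prod t_i = c'$. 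A bound $\prod(q_i-1)\ge m$ gives $B>m$, so $AB=am+1$ forces $A\le a<p$.

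The main obstacle is upgrading this to $A=1$ and then deriving a contradiction with $j\ge 2$. Following the Lemma~\ref{Lemma 3.2} endgame, for each prime $p_r\mid m$ one reduces modulo $p_r^{c_r+1}$, where $c_r=\min_i v_{p_r}(q_i-1)$. When $c_r\ge 1$ the expansion collapses cleanly and one obtains $p_r\mid\sum_{i\in S_r} s_i^{(r)}$, where $S_r$ indexes the $q_i$'s achieving the minimum; the elementary inequality $\sum\le\prod$ for integers $\ge 2$ then leads, after simplification using $\prod_i s_i^{(r)}=(m/p_r^{k_r})c'$, to $p_r^{k_r+1}\le mc'$, rather than the clean $p\le c'<p$ obtained in the prime-power case. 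The difficulty is therefore twofold: handling the case $c_r=0$ (some $q_i-1$ coprime to the particular prime $p_r$), and combining the inequalities across all prime divisors of $m$ to force an actual contradiction. Achieving this, presumably via a Chinese-Remainder-style bookkeeping that mixes the constraints for every $p_r$ and uses $A\equiv 1\pmod{p_r}$ for each $r$ to conclude $A\equiv 1\pmod m$ (hence $A=1$ by $A<p\le m$), is the non-trivial step that genuinely distinguishes the general case from the prime-power case of Lemma~\ref{Lemma 3.2}, and is presumably why the assertion is stated only as a conjecture.
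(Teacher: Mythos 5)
First, a point of comparison: the paper offers no proof of this statement at all --- it is stated as Conjecture~\ref{Conjecture 4.1} and explicitly left open (``We challenge the reader to find a proof to this conjecture or a single counter example''), so there is nothing in the paper to measure your argument against. Your proposal is likewise not a proof, and to your credit you say so. The preliminary reductions you carry out are sound: $c=\phi(n)/m<a<p$ and hence $\gcd(c,m)=1$; the Type~I/Type~II decomposition $n=AB$ with $B$ squarefree; the reduction of $m\mid\phi(n)$ to $m\mid\phi(B)$; and the bound $A\le a<p$. But the argument stops exactly at the two points you flag, and those are genuine gaps rather than bookkeeping. Reducing modulo a single prime $p_r\mid m$ no longer forces $A\equiv 1\pmod{p_r}$, because a Type~II prime $q_i$ is only guaranteed to satisfy $q_i\equiv 1$ modulo \emph{some} prime divisor of $m$, not modulo $p_r$; the factors with $v_{p_r}(q_i-1)=0$ contribute a nontrivial unit modulo $p_r$ that sits alongside $A$ in the congruence, so the ``$A=1$'' step of Lemma~\ref{Lemma 3.2} does not transfer. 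And even granting $A=1$, your terminal inequality $p_r^{k_r+1}\le mc'$ is consistent with the data rather than contradictory, because $\prod_i s_i^{(r)}$ now carries the cofactor $m/p_r^{k_r}$, which in the prime-power case was $1$ and was precisely what made $p\le\sum s_i\le\prod s_i\le a<p$ close.

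The verdict, then, is that the proposal contains correct partial reductions but does not establish the conjecture; the missing idea --- some way of combining the congruence and size constraints across all prime divisors of $m$ simultaneously --- is exactly the ``completely new idea'' the paper anticipates would be required. If you want to salvage something provable, it would be worth isolating the cases your machinery genuinely closes (for instance $m$ a prime power, which is Lemma~\ref{Lemma 3.2}, or $a=2$ with $m$ odd, which the paper disposes of by a parity argument), but as written the proposal should not be presented as a proof of Conjecture~\ref{Conjecture 4.1}.
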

Conjecture 4.1 is a generalization of Lemma~\ref{Lemma 3.2}. Due to the difficulty in proving Lemma~\ref{Lemma 3.2}, a proof of this conjecture is expected to be exceptionally difficult, perhaps a completely new idea may be required to obtain its proof.  We challenge the reader to find a proof to this conjecture or a single counter example. The motivation for finding its proof is in Theorem~\ref{Theorem 4.2}. As will be shown shortly, Theorem~\ref{Theorem 4.2} is an extension of the well-known Lucas's converse of Fermat's little theorem. But first we shall prove  Theorem~\ref{Theorem 4.1}, a basis for the extended primality test.
The reader should find no difficulty proving the trivial case when $a=2$ and $m$ is odd. If $n=2m+1$, and $m\ |\ \phi (n)$, we have $\phi(n)=km$, $k\le 2$.  Because $\phi(n)$ is even, we must have $k=2$, $\phi(n)=2m=n-1$, thus $n$ is prime.

\begin{theorem} \label{Theorem 4.1}
Let $n=am+1$, where $a$ and $m>1$ are relatively prime positive integers such that $n$ is prime whenever  $m\ |\ \phi (n)$. If for each prime $q_i$ dividing $m$, there exists an integer $b_i$ such that ${b_i}^{n-1}\equiv 1\ (\mathrm{mod}\ n)$ and ${b_i}^{(n-1)/q_i}\not\equiv 1(\mathrm{mod}\ n)$ then $n$ is prime.
\end{theorem}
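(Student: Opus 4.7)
The plan is to mimic the order-of-an-integer argument used in the proof of Theorem~\ref{Theorem 3.1}, but carry it out separately for each prime $q_i$ dividing $m$, and then combine the resulting divisibility statements to conclude $m \mid \phi(n)$, at which point the hypothesis of the theorem hands us primality.

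First I would write $m = q_1^{e_1}\cdots q_r^{e_r}$ for the prime factorization of $m$, and for each $i$ set $d_i = \mathrm{ord}_n(b_i)$. The hypothesis $b_i^{n-1}\equiv 1\pmod{n}$ together with Theorem~\ref{Theorem 1.1} gives $d_i \mid n-1 = am$. Let $q_i^{f_i}$ denote the exact $q_i$-part of $d_i$; since $(a,m)=1$, the exact $q_i$-part of $n-1$ equals $q_i^{e_i}$, so $f_i \le e_i$.

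Next I would argue $f_i = e_i$. Suppose for contradiction $f_i \le e_i - 1$. Then every prime power in $d_i$ divides the corresponding prime power in $(n-1)/q_i$, hence $d_i \mid (n-1)/q_i$, and by Theorem~\ref{Theorem 1.1} this forces $b_i^{(n-1)/q_i}\equiv 1\pmod{n}$, contradicting the hypothesis. So $q_i^{e_i} \mid d_i$, and since $d_i \mid \phi(n)$ by Theorem~\ref{Theorem 1.1}, we conclude $q_i^{e_i} \mid \phi(n)$. Running this for every $i$ and using that the $q_i$ are distinct primes, the prime power divisors combine to give $m = \prod_{i=1}^{r} q_i^{e_i} \mid \phi(n)$. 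The standing hypothesis on $n$ then yields $n$ is prime.

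There is essentially no obstacle here; the argument is a direct extension of the one used in Theorem~\ref{Theorem 3.1}, where we had only a single prime $p$ to worry about. The only mild subtlety is making sure the exponents match up correctly, which is handled by the coprimality assumption $(a,m)=1$: it guarantees that the $q_i$-part of $n-1$ lives entirely inside $m$, which is precisely what allows the jump from ``$q_i^{f_i} < q_i^{e_i}$'' to ``$d_i \mid (n-1)/q_i$.''
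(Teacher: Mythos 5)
Your proposal is correct and follows essentially the same route as the paper: use $\mathrm{ord}_n b_i \mid n-1$ and $\mathrm{ord}_n b_i \nmid (n-1)/q_i$ to force $q_i^{e_i}\mid \mathrm{ord}_n b_i \mid \phi(n)$ for each $i$, combine to get $m\mid\phi(n)$, and invoke the standing hypothesis. The only difference is that you spell out the valuation argument that the paper leaves to the reader.
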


\begin{proof}
From Theorem~\ref{Theorem 1.1},  $\mathrm{ord}_nb_i\ |\ n-1$. Let $m={q_1}^{a_1}{q_2}^{a_2}\cdots {q_k}^{a_k}$ be the prime power factorization of $m$. Again, from Theorem~\ref{Theorem 1.1}, ${b_i}^{(n-1)/q_i}\not\equiv 1(\mathrm{mod}\ n)$ implies $\mathrm{ord}_nb_i\ \nmid (n-1)/q_i$. We leave to the reader to verify that the combination of $\mathrm{ord}_nb_i\ |\ n-1$ and $\mathrm{ord}_nb_i\ \nmid (n-1)/q_i$ implies ${q_i}^{a_i}\ |\ \mathrm{ord}_nb_i$. From Theorem~\ref{Theorem 1.1}, $\mathrm{ord}_nb_i\ |\ \phi (n)$  therefore for each $i$, ${q_i}^{a_i}\ |\ \phi (n)$ hence $m\ |\ \phi (n)$. Because $n$ is assumed prime whenever $m\ |\ \phi (n)$ , we conclude $n$ is prime.
\end{proof}
Theorem~\ref{Theorem 4.1} has little practical value on its own but becomes powerful when the integer $m$ is known beforehand. Assuming the truth of Conjecture~\ref{Conjecture 4.1}, Theorem~\ref{Theorem 4.1} is an optimized primality test for such integers. 

\begin{theorem}[Extended Lucas's converse of Fermat's little theorem] \label{Theorem 4.2}
Assume Conjecture~\ref{Conjecture 4.1} holds. Let $n=am+1$, where $a$ and $m$ are positive integers and let $p$ be the least prime divisor of $m$, $a<p$. If for each prime $q$ dividing $m$, there exists an integer $b$ such that $b^{n-1}\equiv 1\ (\mathrm{mod}\ n)$ and $b^{(n-1)/q}\not\equiv 1(\mathrm{mod}\ n)$ then $n$ is prime.
\end{theorem}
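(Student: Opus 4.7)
The plan is to reduce Theorem~\ref{Theorem 4.2} to Theorem~\ref{Theorem 4.1} by verifying that every hypothesis of the latter is met. Theorem~\ref{Theorem 4.1} requires three ingredients: (i) $a$ and $m$ are relatively prime, (ii) $n = am+1$ is prime whenever $m \mid \phi(n)$, and (iii) for each prime $q \mid m$ there is a witness $b$ with $b^{n-1} \equiv 1 \pmod{n}$ and $b^{(n-1)/q} \not\equiv 1 \pmod{n}$. Ingredient (iii) is exactly the standing hypothesis of Theorem~\ref{Theorem 4.2}, so nothing needs to be done there.

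First I would establish (i). By assumption $p$ is the least prime divisor of $m$, so every prime factor of $m$ is at least $p$. Since $a < p$, no prime can divide both $a$ and $m$, and hence $(a,m) = 1$. This is the place where the strict inequality $a < p$ is used for the coprimality conclusion, and it is an immediate observation rather than a computation.

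Next I would establish (ii). This is precisely the content of Conjecture~\ref{Conjecture 4.1}: for $n = am+1$ with $a < p$ and $p$ the least prime divisor of $m$, the divisibility $m \mid \phi(n)$ forces $n$ to be prime. Since Theorem~\ref{Theorem 4.2} assumes Conjecture~\ref{Conjecture 4.1}, this hypothesis is supplied directly.

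With (i), (ii), (iii) in hand, I would invoke Theorem~\ref{Theorem 4.1} to conclude that $n$ is prime, completing the proof. The substantive content of Theorem~\ref{Theorem 4.2} therefore lies entirely in Conjecture~\ref{Conjecture 4.1}; the real obstacle is not in this derivation but in proving the conjecture itself, which the paper explicitly leaves open. The only mildly delicate point in the present proof is handling the degenerate case $m = 1$ (which is excluded in Theorem~\ref{Theorem 4.1} by the assumption $m > 1$ and is vacuous here because $p$ would not exist), so I would note in passing that the statement implicitly assumes $m > 1$.
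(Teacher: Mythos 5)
Your proposal is correct and takes exactly the route the paper does: the paper's own proof is the one-line reduction ``assuming Conjecture~\ref{Conjecture 4.1}, the result follows from Theorem~\ref{Theorem 4.1},'' and you have simply made explicit the two hypotheses being discharged, namely that $a<p$ forces $(a,m)=1$ and that the conjecture supplies the ``$m\mid\phi(n)$ implies $n$ prime'' condition. Your added remarks on coprimality and the $m>1$ edge case are accurate details the paper leaves implicit.
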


\begin{proof}
Assuming the truth of Conjecture~\ref{Conjecture 4.1}, Theorem~\ref{Theorem 4.2} follows directly from Theorem~\ref{Theorem 4.1}
\end{proof}
Taking $a=1$, $m=n-1$ and $p=2$, we obtain the well-known Lucas's converse of Fermat's little theorem. Theorem~\ref{Theorem 4.2} is thus an extension of Lucas's converse of Fermat's little theorem. Since the factorization of $a$ is not necessary, the larger the value of $a$, the faster the primality test. We illustrate Theorem~\ref{Theorem 4.2} with an example.

\begin{example}
Show that $n=7867=18 \cdot 19 \cdot 23+1$ is prime. Taking $a=18$ and $m=19 \cdot 23$; Using modular exponentiation, we find that $2^{n-1}\equiv 1\ (\mathrm{mod}\ n)$, $2^{(n-1)/19}\equiv 5437\not\equiv 1(\mathrm{mod}\ n)$ and $2^{(n-1)/23}\equiv 7369\not\equiv 1(\mathrm{mod}\ n)$. From Theorem~\ref{Theorem 4.2}, $n$ is prime. Using Pocklington's primality test, there's need to further verify that $(5437,\ n)=(7369,\ n)=1$.
\end{example}
A generalization of Theorem~\ref{Theorem 4.1} is possible in which the condition $(a,\ m)=1$ is dropped.

\begin{theorem} \label{Theorem 4.3}
Let $n-1=\prod^k_{i=1}{{p_i}^{s_i}}$ where $p_i$ are distinct primes, $s_i\ge 1$. Let $m=\prod^k_{i=1}{{p_i}^{t_i}}$, $0\le t_i\le s_i$ be an integer such that $n$ is prime whenever $m\ |\ \phi (n)$. If for each prime $p_i$ dividing $m$, there exists an integer $b_i$ such that ${b_i}^{n-1}\equiv 1\ (\mathrm{mod}\ n)$ and ${b_i}^{(n-1)/({p_i}^{s_i-t_i+1})}\not\equiv 1(\mathrm{mod}\ n)$ then $n$ is prime.
\end{theorem}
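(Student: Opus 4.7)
The plan is to mimic the structure of the proof of Theorem~\ref{Theorem 4.1}, upgrading the divisibility arguments to a $p_i$-adic valuation bookkeeping, since here we no longer assume $\gcd(a,m)=1$. The goal is again to show $m\mid \phi(n)$; once this is established, the standing hypothesis on $m$ forces $n$ to be prime, and we are done.

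First, fix any index $i$ with $t_i\ge 1$, so that $p_i\mid m$. Factor $n-1=p_i^{s_i}M_i$ with $\gcd(p_i,M_i)=1$. By Theorem~\ref{Theorem 1.1}, $b_i^{n-1}\equiv 1\,(\mathrm{mod}\ n)$ implies $\mathrm{ord}_n b_i\mid n-1$, so we may write $\mathrm{ord}_n b_i=p_i^{e_i}\ell_i$ with $0\le e_i\le s_i$ and $\ell_i\mid M_i$. The exponent $(n-1)/p_i^{s_i-t_i+1}$ is exactly $p_i^{t_i-1}M_i$, and this is the crux of why the statement is set up with the odd-looking exponent $s_i-t_i+1$: by Theorem~\ref{Theorem 1.1}, the non-congruence $b_i^{(n-1)/p_i^{s_i-t_i+1}}\not\equiv 1\,(\mathrm{mod}\ n)$ is equivalent to $\mathrm{ord}_n b_i\nmid p_i^{t_i-1}M_i$. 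Since $\ell_i\mid M_i$ automatically, the obstruction must come from the $p_i$-part, i.e.\ $p_i^{e_i}\nmid p_i^{t_i-1}$, forcing $e_i\ge t_i$. Hence $p_i^{t_i}\mid \mathrm{ord}_n b_i$.

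Now apply Theorem~\ref{Theorem 1.1} once more to get $\mathrm{ord}_n b_i\mid \phi(n)$, whence $p_i^{t_i}\mid \phi(n)$ for every $i$ with $t_i\ge 1$ (the case $t_i=0$ is trivial). Because the $p_i$ are pairwise distinct primes, multiplying these coprime divisibilities yields $m=\prod_i p_i^{t_i}\mid \phi(n)$. By the hypothesis placed on $m$, this already implies that $n$ is prime.

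There is no substantive obstacle beyond the valuation bookkeeping just described. The only delicate point is recognizing that the awkward-looking exponent $s_i-t_i+1$ is tailored precisely so that the non-congruence removes exactly one $p_i$-factor above the target threshold $p_i^{t_i}$; after that observation the argument is a direct translation of the proof of Theorem~\ref{Theorem 4.1}. Thus no auxiliary lemma or new idea is required, and the proof can be written very briefly by citing Theorem~\ref{Theorem 1.1} three times and invoking the hypothesis on $m$ at the end.
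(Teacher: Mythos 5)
Your proposal is correct and fills in exactly the valuation argument that the paper's terse proof gestures at ("using a similar argument as in the proof of Theorem~\ref{Theorem 4.1}"): the non-congruence at exponent $(n-1)/p_i^{s_i-t_i+1}=p_i^{t_i-1}M_i$ forces $p_i^{t_i}\mid \mathrm{ord}_n b_i\mid \phi(n)$, and the coprime divisibilities combine to give $m\mid\phi(n)$. This is essentially the same approach as the paper, just written out in full.
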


\begin{proof}
Using a similar argument as in the proof of Theorem~\ref{Theorem 4.1}, for each $i$, we have ${p_i}^{{s_i-(s}_i-t_i+1)+1}={p_i}^{t_i}\ |\ \phi (n)$ hence $m\ |\ \phi (n)$. By the hypotheses of Theorem~\ref{Theorem 4.3}, $n$ is prime.
\end{proof}

\begin{remark}
Theorem~\ref{Theorem 4.3} is a strengthening of Theorem~\ref{Theorem 4.1}; It does not require that $m$ and $(n-1)/m$ be relatively prime as required in Theorem~\ref{Theorem 4.1}. Taking $s_i=t_i$ for all $i$, we have Theorem~\ref{Theorem 4.1}. Theorem~\ref{Theorem 4.3} is relatively more efficient than Theorem~\ref{Theorem 4.1} when ${t_i<s}_i$ for some $i$. Theorem~\ref{Theorem 4.4} demonstrates this idea.
\end{remark}

\begin{lemma} \label{Lemma 4.1}
Let $n=ap^2+1$, $a<p$, $p$ is an odd prime. If $p\ |\ \phi (n)$, then $n$ is prime.
\end{lemma}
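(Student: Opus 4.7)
The plan is to argue by contradiction: suppose $n = ap^2 + 1$ is composite while $p \mid \phi(n)$. The strategy mimics the opening moves of Lemma~\ref{Lemma 2.1}. Since $n \equiv 1 \pmod{p}$, no prime factor of $n$ equals $p$, so the divisibility $p \mid \phi(n)$ forces at least one prime factor $q$ of $n$ to satisfy $q \equiv 1 \pmod{p}$. Write $n = qm$ and reduce modulo $p$ to conclude $m \equiv 1 \pmod{p}$; compositeness of $n$ gives $m > 1$ and hence $m \geq p + 1$. Accordingly, set $q = sp + 1$ and $m = rp + 1$ with integers $r, s \geq 1$.

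Next, I would expand the identity $qm = ap^2 + 1$ to obtain
\[
rsp^2 + (r+s)p + 1 = ap^2 + 1,
\]
whence $rsp + (r+s) = ap$, so $p \mid r + s$. Writing $r + s = jp$ (with $j \geq 1$, since $r+s$ is a positive multiple of $p$), the relation reduces to $a = rs + j$, and the standing hypothesis $a < p$ then forces both $rs < p$ and $j < p$.

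Finally, I would split into three cases according to whether $\min(r,s) \geq 2$, or $s = 1$, or $r = 1$. If $r, s \geq 2$, then $(r-1)(s-1) \geq 1$ gives $rs \geq r + s = jp \geq p$, contradicting $rs < p$. If $s = 1$, then $q = p + 1$ is even and strictly greater than $2$ (as $p$ is an odd prime), which is incompatible with $q$ being prime. If $r = 1$, then $s = jp - 1$ and $a = rs + j = j(p+1) - 1 \geq p$, again contradicting $a < p$. Every case is impossible, so $n$ must be prime.

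The delicate point, and where I expect the main subtlety to lie, is the asymmetric treatment of $r$ and $s$: the factor $q = sp+1$ must genuinely be prime, whereas $m = rp+1$ is merely a positive divisor of $n$. It is exactly this asymmetry that lets one rule out the $s = 1$ edge case by a parity argument (using that $p$ is odd) and the $r = 1$ edge case by the linear estimate $a \geq j(p+1) - 1$; a naively symmetric argument would stall precisely at these two boundary configurations.
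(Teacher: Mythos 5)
Your proof is correct and follows essentially the same route as the paper's: factor $n=(sp+1)(rp+1)$ via the Lemma~\ref{Lemma 2.1} argument, deduce $p \mid r+s$ from the expansion, and derive a size contradiction from $a<p$ (the paper phrases the same inequality as $n>rsp^2\ge p^3>n$). If anything, your explicit treatment of the boundary cases $r=1$ and $s=1$ is more careful than the paper's, which simply asserts that both coefficients are at least $2$ on the strength of an earlier remark justified only for odd $n$.
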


\begin{proof}
From Lemma~\ref{Lemma 2.1}, $n=(sp+1)(tp+1)$ for some integer $sp+1$ and prime $tp+1$. Thus, $n=stp^2+(s+t)p+1=ap^2+1$. We see that $p\ |\ s+t$, $p\le s+t$. Since $s$ and $t$ are at least 2, we have $s+t\le st$. This holds because its equivalent to $(s-1)(t-1)\ge 1$. Thus, $n=stp^2+(s+t)p+1>stp^2\ge {p\cdot p}^2=p^3$, a contradiction because $n<p^3$. Therefore $s=0$, $n=tp+1$. This completes the proof.
\end{proof}

\begin{theorem} \label{Theorem 4.4}
Let $n=ap^2+1$, $p>a$ is prime. If an integer $b$ exists such that $b^{n-1}\equiv 1\ (\mathrm{mod}\ n)$ and $b^a\not\equiv 1\ (\mathrm{mod}\ n)$ then $n$ is prime.
\end{theorem}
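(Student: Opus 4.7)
My plan is to mirror the proof of Theorem~\ref{Theorem 3.1}, but with Lemma~\ref{Lemma 4.1} replacing Lemma~\ref{Lemma 3.2}. The crucial observation is that under the hypothesis $a<p$ with $k=2$, Lemma~\ref{Lemma 4.1} only requires a \emph{single} power of $p$ in $\phi(n)$ to force $n$ to be prime, whereas the analogous lemma for Theorem~\ref{Theorem 3.1} needed $p^{k}$. This is exactly what will allow us to weaken the exponent being tested from $(n-1)/p = ap$ down to $a$.

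First, I would argue by contradiction and assume $n$ is composite while $b^{n-1}\equiv 1\pmod{n}$. By Theorem~\ref{Theorem 1.1}, $\mathrm{ord}_{n}b$ divides $n-1 = ap^{2}$. Since $p$ is prime and $p > a$, we have $\gcd(a,p^{2}) = 1$, so the order splits uniquely as $\mathrm{ord}_{n}b = d_{1} p^{t}$ with $d_{1}\mid a$ and $t\in\{0,1,2\}$.

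Next, I would rule out $t\ge 1$. If $t\ge 1$, then $p\mid \mathrm{ord}_{n}b$, and since $\mathrm{ord}_{n}b\mid\phi(n)$ by Theorem~\ref{Theorem 1.1}, we obtain $p\mid\phi(n)$. Lemma~\ref{Lemma 4.1} then forces $n$ to be prime, contradicting our assumption. Hence $t=0$, so $\mathrm{ord}_{n}b\mid a$, and therefore $b^{a}\equiv 1\pmod{n}$ again by Theorem~\ref{Theorem 1.1}. Taking the contrapositive yields the theorem.

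The main obstacle has already been absorbed into Lemma~\ref{Lemma 4.1}: once that lemma is available, the remaining deduction is a direct translation of the argument used for Theorem~\ref{Theorem 3.1}, and no new combinatorial difficulty arises. As a sanity check, the result is also the special case $m=p$, $s_{p}=2$, $t_{p}=1$ of Theorem~\ref{Theorem 4.3}, since then the exponent $(n-1)/p^{s_{p}-t_{p}+1} = (n-1)/p^{2} = a$ appearing there matches the hypothesis verbatim, and the standing requirement of Theorem~\ref{Theorem 4.3} that ``$n$ is prime whenever $m\mid\phi(n)$'' is precisely the content of Lemma~\ref{Lemma 4.1}.
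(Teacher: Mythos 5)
Your proposal is correct and takes essentially the same route as the paper: both rest entirely on Lemma~\ref{Lemma 4.1} supplying ``$p\mid\phi(n)$ implies $n$ prime,'' and your direct order computation is just the unpacked form of the appeal to Theorem~\ref{Theorem 4.3} (with $m=p$, $s_p=2$, $t_p=1$) that the paper makes in one line --- a specialization you yourself identify at the end. No gaps; the argument is sound.
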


\begin{proof}
From Lemma~\ref{Lemma 4.1}, $p\ |\ \phi (n)$ if and only if $n$ is prime. Because the integer $b$ satisfies the hypothesis of Theorem~\ref{Theorem 4.3}, $n$ is prime.
\end{proof}

\begin{remark}
Similar to Theorem~\ref{Theorem 2.1}, Theorem~\ref{Theorem 4.4} allows us to deduce the simple unexpected result; For a given positive integer $a$, there are finitely many Fermat pseudoprimes of the form $ap^2+1$  to any base $b>1$. In light of Theorem~\ref{Theorem 4.4}, this is true because $b^a\not\equiv 1\ (\mathrm{mod}\ n)$ for all $n>b^a$. This in effect removes the need to further verify that $(b^{ap}-1,\ n)=1$ as required in the more versatile Pocklington's primality test.
\end{remark}
Note that Theorem~\ref{Theorem 4.3} is an open theorem; Just like Theorem~\ref{Theorem 4.1}, it can be used for any suitable choice of the positive integer $m$ for which $m\ |\ \phi (n)$ if and only if $n$ is prime. The reader is welcome to investigate this property further.

\begin{lemma} \label{Lemma 4.2}
Let $a$ be a positive integer that is not a perfect cube. The diophantine equation $(xz+1)(yz+1)={az}^3+1$ has no solutions in positive integers $x$, $y$, $z$ with $z>a^2+2a$.  
\end{lemma}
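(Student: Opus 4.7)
The plan is to reduce the equation to a one-parameter family by solving for $y$ in terms of a single auxiliary integer $d = kx - a$, derive a closed-form expression for $z$ in terms of $d$, and then maximize it directly.

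First I would expand the equation as $xyz^2 + (x+y)z = az^3$ and cancel a factor of $z$, obtaining $xyz + (x+y) = az^2$. Thus $z \mid x+y$; setting $x+y = kz$ with $k$ a positive integer immediately gives $xy = az - k$, and combining the two yields the identity $a(x+y) = k(xy+k)$. An application of $(x-1)(y-1) \ge 0$ in the form $x+y \le xy+1$ then shows $(a-k)xy \ge k^2 - a$; if $k \ge a$ this forces $k = a = 1$, which is excluded because $1$ is a perfect cube. Hence $1 \le k < a$, and in particular $a \ge 2$.

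The parametrization is the heart of the argument. Solving $a(x+y) = k(xy+k)$ for $y$ yields $y = (ax - k^2)/(kx - a)$; setting $d := kx - a$, we get $x = (a+d)/k$ (so $k \mid a+d$) and, after substitution, $y = (a^2 + ad - k^3)/(kd)$, whence $d \mid a^2 - k^3$. Writing $N := a^2 - k^3$, the hypothesis that $a$ is not a perfect cube is essential here: it guarantees $N \ne 0$, since $a^2 = k^3$ would force $k = m^2$ and $a = m^3$ for some integer $m$. A direct calculation then yields the closed form
\[
z \;=\; \frac{x+y}{k} \;=\; \frac{(a+d)^2 - k^3}{k^2 d} \;=\; \frac{1}{k^2}\!\left(\frac{N}{d} + d + 2a\right).
\]

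Finally I would bound $z$ by maximizing $N/d + d$ over all nonzero integer divisors $d$ of $N$, a superset of the admissible parameters. A short case analysis on the signs of $N$ and $d$ gives $N/d + d \le N + 1$ when $N > 0$ (attained at $d = 1$ and $d = N$) and $N/d + d \le |N| - 1$ when $N < 0$. Substituting: if $N > 0$, then $k^2 z \le (a+1)^2 - k^3$, which equals $a^2 + 2a$ when $k = 1$ and is strictly smaller for $k \ge 2$; if $N < 0$, then $k^2 z \le k^3 - (a-1)^2$, forcing $z < k < a < a^2 + 2a$. In every case $z \le a^2 + 2a$, contradicting $z > a^2 + 2a$. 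The main obstacle I anticipate is the sign-and-divisibility book-keeping for $d$, in particular pinning down why $N \ne 0$ is forced and why the unconstrained divisor maximum really majorizes every admissible $z$; once the parametrization is in place, the remaining algebra is routine.
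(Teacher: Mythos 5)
Your proposal is correct and takes essentially the same route as the paper: your parameter $k$ is the paper's $t$ (both satisfy $xy=az-k$), your divisor pair $d$ and $N/d$ with $d=kx-a$ is exactly the paper's factorization $(tx-a)(ty-a)=a^2-t^3$, and your bound on $N/d+d$ is the paper's sum-of-complementary-divisors estimate split by the sign of $a^2-t^3$. The only cosmetic difference is that you obtain $k$ directly from $z\mid x+y$ and a closed form for $z$, whereas the paper reaches the same identity via the discriminant of the quadratic in $z$ and the cruder bound $z\le x+y$.
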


\begin{proof}
The equation $(xz+1)(yz+1)=az^3+1$ can be rewritten as $az^2-xyz-(x+y)=0$. We shall show that for any positive integer solution $(x,y,z)$, we have $z \le a^2+2a. \ $
Note that $z \ | \ x+y$, therefore $z \le x+y. \ $  If ($x, y, z)$ is a solution, then 
\begin{equation*} z = \frac{xy+\sqrt{x^2y^2+4a(x+y)}} {2a}
\end{equation*}
 In order for $z$ to be rational, the discriminant must be a perfect square. Therefore $w^2 = x^2y^2+4a(x+y)$. We see that $w > xy$ and $w \equiv xy \ ( \mathrm{mod} \ 2)$. We can write $w = xy + 2t$, $t > 0$.
Substituting $w$ above,  $(xy+2t)^2 = x^2y^2+4a(x+y)$. Expanding and simplifying, $txy - ax - ay +t^2 = 0$. Multiplying through by $t$ and factoring, $(tx-a)(ty-a)=a^2 - t^3$. We must have $t \le a-1$ otherwise $RHS<0$ and $LHS \ge 0$. Because $a$ is not a perfect cube, $a^2 - t^3 \not = 0$. The remainder of the proof utilizes the result: If $ab = c \  $ where $a,b, c \not = 0$ are integers then $a+b \le c+1$ if $c>0$ and $a+b \le -(c+1)$ if $c < 0$.
We now consider two cases;

Case $1: \ $  $ a^2 - t^3 >0 \ ;$
Using the result above on the factored equation, we have $(tx-a)+(ty-a) \le a^2 - t^3+1 \le a^2$.
Hence, $z \le x+y \le (a^2+2a)/t \le a^2+2a \\ $.

Case $2: \ $  $ a^2 - t^3 < 0 \ ;$
As in case $1$, we have $(tx-a)+(ty-a) \le t^3 - a^2-1 \ $,  $x+y \le t^2 - (a^2-2a+1)/t < t^2$.  Hence , $z \le x+y < t^2 \le (a-1)^2 < a^2 +2a$
\end{proof}

\begin{theorem}\label {Theorem 4.5}
Let $n=ap^3+1$, $a\neq m^3$, $p>a^2+2a$  is prime. If an integer $b$ exists such that $b^{n-1}\equiv 1\ (\mathrm{mod} \ n)$ and $b^a\not\equiv 1\ (\mathrm{mod}\ n)$ then $n$ is prime.
\end{theorem}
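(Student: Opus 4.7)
The strategy mirrors the proof of Theorem~\ref{Theorem 4.4}: first show that, under the hypotheses of the theorem, the divisibility $p \mid \phi(n)$ already forces $n$ to be prime, and then invoke Theorem~\ref{Theorem 4.3} with $m = p$ to conclude. In the quadratic case, Lemma~\ref{Lemma 4.1} supplied the ``divisor implies prime'' step; here that role will be played by Lemma~\ref{Lemma 4.2}.

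Concretely, I would first establish the following claim: if $n = ap^3 + 1$ with $p > a^2 + 2a$ prime and $a \ne m^3$, and $p \mid \phi(n)$, then $n$ is prime. Since $p > a^2 + 2a \ge 3$, the prime $p$ is odd, so writing $n = (ap^2)\cdot p + 1$ and applying Lemma~\ref{Lemma 2.1} produces a factorization $n = (sp+1)(tp+1)$ with $tp+1$ prime and $s \ge 0$. If $s = 0$, then $n = tp+1$ is itself prime. Otherwise $s, t \ge 1$, and the identity $(sp+1)(tp+1) = ap^3 + 1$ yields a positive-integer triple $(x,y,z) = (s,t,p)$ solving $(xz+1)(yz+1) = az^3 + 1$ with $z = p > a^2 + 2a$, directly contradicting Lemma~\ref{Lemma 4.2}. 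Hence the case $s \ge 1$ is impossible, and $n$ must be prime.

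With the implication $p \mid \phi(n) \Rightarrow n$ prime in hand, the theorem follows from Theorem~\ref{Theorem 4.3} applied to the factorization $n - 1 = a \cdot p^3$ (note $p \nmid a$ because $a < p$, which itself follows from $p > a^2 + 2a$), with the choice $m = p$. For the prime $p$ this gives $s_1 = 3$ and $t_1 = 1$, so the test exponent $(n-1)/p^{s_1 - t_1 + 1}$ is exactly $(n-1)/p^3 = a$; the assumed witness $b$ therefore satisfies both $b^{n-1}\equiv 1\ (\mathrm{mod}\ n)$ and $b^{(n-1)/p^{s_1-t_1+1}} \not\equiv 1\ (\mathrm{mod}\ n)$, and Theorem~\ref{Theorem 4.3} concludes that $n$ is prime. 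The entire difficulty of the argument is concentrated in the ``divisor implies prime'' step, and that obstacle is essentially discharged by Lemma~\ref{Lemma 4.2}; once the Diophantine lemma is granted, the remainder is a routine parameter match against the template of Theorem~\ref{Theorem 4.4}.
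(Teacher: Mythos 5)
Your proof is correct and takes essentially the same route as the paper's: combine Lemma~\ref{Lemma 2.1} with Lemma~\ref{Lemma 4.2} to show that $p\mid\phi(n)$ forces $n$ to be prime, then apply Theorem~\ref{Theorem 4.3} with $m=p$ so that the test exponent becomes $(n-1)/p^{3}=a$. You have merely filled in the details (the substitution $(x,y,z)=(s,t,p)$ and the parameter match $s_1=3$, $t_1=1$) that the paper's two-line proof leaves implicit.
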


\begin{proof}
From Lemma~\ref{Lemma 2.1} and Lemma~\ref{Lemma 4.2}, we have $p\ |\ \phi (n)$ if and only if $n$ is prime. Because the integer $b$ satisfies the hypothesis of Theorem~\ref{Theorem 4.3}, we conclude that $n$ is prime.
\end{proof}

\begin{remark}
We immediately deduce that for any positive integer $a$ that is not a perfect cube, there are finitely many Fermat pseudoprimes of the form $ap^3+1$ to any base $b>1$. 
\end{remark}
Conclusion. In this paper we have proved that for every positive integer $a$, there are finitely many Fermat pseudoprimes of the form $ap^k+1$, $k= 1, 2$  to any base $b>1$. We have also shown that this results holds for $k = 3$, $a \not = m^3$.  What other forms of integers do we have finitely many pseudoprimes? For a given positive integer $a$ and integer $k>3$, are there finitely many pseudoprimes of the form $ap^k+1$ to a given base $b$?
\section*{Acknowledgement.}
I thank my former lecturer, Dr. Bamunoba Alex Samuel for the encouragement and discussions  in Number Theory. Am also grateful for the referee's helpful comments and suggestions.

College of Eng., Design, Art and Technology, Makerere university.		Email:  ariko@cedat.mak.ac.ug

\end{document}